\def\A{{\mathcal A}}
\def\Th{\Theta}
\def\N{\mathbb{N}}
\def\X{\mathscr X}
\def\C{\mathcal{C}}
\def\B{{\mathcal B}}
\def\H{\mathcal H}
\def\K{\mathcal K}
\def\P{\mathcal P}
\def\S{\mathcal S}
\def\Ai{\mathcal A^\infty}
\def\I{{\rm 1\kern-.26em I}}
\def\sp{\mathop{\mathrm{sp}}\nolimits}
\def\spe{{\rm sp}_{{\rm ess}}}
\def\1{\mathfrak{1}}
\def\0{\mathfrak{0}}
\def\p{\parallel}
\def\<{\langle}
\def\>{\rangle}
\def\Op{\mathrm{Op}}
\def\({\left(}
\def\){\right)}
\def\[{\left[}
\def\]{\right]}
 \newtheorem{thm}{Theorem}[section]
 \newtheorem{cor}[thm]{Corollary}
 \newtheorem{lem}[thm]{Lemma}
 \newtheorem{prop}[thm]{Proposition}
 \newtheorem{rem}[thm]{Remark}
\begin{document}

\title{On the Essential Spectrum of Phase-Space\\ Anisotropic Pseudodifferential Operators}

\author{Marius M\u antoiu }

\maketitle

\footnote*{
\textit{Key Words:}  Pseudodifferential operator; phase
space; spectrum; ultrafilter.

\textit{2010 Mathematics Subject Classification: 35S05; 42B35;43A32}

\begin{itemize}
\item[] Departamento de Matem\'aticas, Universidad de Chile, Las Palmeras 3425, Casilla 653,
Santiago, Chile, \\
Email: {\tt mantoiu@uchile.cl}

\end{itemize}}

\bigskip


 Supported by {\it N\'ucleo Cientifico ICM P07-027-F
"Mathematical Theory of Quantum and Classical Magnetic Systems"}.
\maketitle

\begin{abstract}
A phase-space anisotropic operator in $\H=L^2(\mathbb R^n)$ is a
self-adjoint operator whose resolvent family belongs to a natural
$C^*$-completion of the space of H\"ormander symbols of order
zero. Equivalently, each member of the resolvent family is
norm-continuous under conjugation with the Schr\"odinger unitary
representation of the Heisenberg group. The essential spectrum of
such a phase-space anisotropic operator is the closure of the
union of usual spectra of all its "phase-space asymptotic
localizations", obtained as limits over diverging ultrafilters of
$\mathbb R^{n}\times\mathbb R^n$-translations of the operator. The
result extends previous analysis of the purely configurational
anisotropic operators, for which only the behavior at infinity in
$\mathbb R^n$ was allowed to be non-trivial.
\end{abstract}

\section{Introduction and main results}\label{duci}

We are going to study self-adjoint operators acting in the complex Hilbert space $\H:=L^2(\X)$,
where $\X$ is an $n$-dimensional real vector space.
Let us also set $\Xi:=\X\times\X^*$, where $\X^*$ denotes the dual of $\X$. For reasons coming from physics, we are going to
call the spaces $\X$,$\,\X^*$ and $\Xi$ {\it the configuration}, {\it the momentum} and {\it the phase space}, respectively.
On $\Xi$ there is a canonical symplectic form given by $[\![X,Y]\!]=[\![(x,\xi),(y,\eta)]\!]:=y\cdot\xi-x\cdot\eta$,
in terms of the duality $\X\times\X^*\ni(z,\zeta)\mapsto z\cdot\zeta:=\zeta(z)\in\mathbb R$.

Our main result will be a formula giving the essential spectrum $\spe(H)$ of operators $H$ affiliated
to a remarkable $C^*$-algebra $\mathbf B^0(\H)$ of bounded linear operators in $\H$.
Affiliation means that the resolvent family $\{(H-z)^{-1}\mid z\in\mathbb C\setminus\mathbb R\}$
of $H$ belongs to $\mathbf B^0(\H)$. By a straightforward
application of the Stone-Weierstrass Theorem this implies actually that $\varphi(H)$ (constructed by the usual functional
calculus) belongs to $\mathbf B^0(\H)$ for each continuous function $\varphi:\mathbb R\rightarrow\mathbb C$
which vanishes at infinity. We send to \cite{ABG} or to \cite[Sect.2.1]{GI1} for more on this concept,
which is different from the one introduced by Woronowicz \cite{Wo}.

The above mentioned formula will involve a certain type of limits of the operator $H$ along
suitable filters of the phase space $\Xi$.

To define $\mathbf B^0(\H)$, we introduce first some notations. We set $\mathbf B(\H)$ for the $C^*$-algebra of
linear bounded operators in $\H$ and $\mathbf C_0(\H)$ for its ideal of compact operators.
There is a unitary projective representation $W:\Xi\rightarrow \mathbf B(\H)$, given by
\begin{equation}\label{proj}
[W(x,\xi)u](y):=e^{i(y-x/2)\cdot\xi}u(y-x),\quad\ x,y\in\X,\ \xi\in\X^*,\ u\in\H
\end{equation}
and verifying
\begin{equation}\label{project}
W(X)W(Y)=\exp(i/2[\![X,Y]\!])W(X+Y),\quad\ \forall\,X,Y\in\Xi.
\end{equation}
In terms of $P=(P_1=-i\partial_1,\cdots P_n=-i\partial_n)$ and $Q=(Q_1,\dots,Q_n)$, the usual momentum and position operators
in $\H$, one has $W(x,\xi)=e^{-\frac{i}{2}x\cdot\xi}e^{iQ\cdot\xi}e^{-ix\cdot P}$.
Associated to $W$, one has a (true) action of $\,\Xi$ by automorphisms of the $C^*$-algebra $\mathbf B(\H)$ given by
\begin{equation}\label{auto}
\mathbf T_X(S):=W(X)SW(-X),\ \quad X\in\Xi,\ S\in\mathbf B(\H).
\end{equation}
It is not norm continuous, so it defines a proper $C^*$-subalgebra
\begin{equation}\label{cvect}
\mathbf B^0(\H):=\{S\in\mathbf B(\H)\mid X\mapsto\mathbf T_X(S)\in\mathbf B(\H)\ {\rm is}\ \parallel\cdot\parallel\!
{\rm -continuous}\}.
\end{equation}

{\it The Fr\'echet filter}, denoted conveniently by $\infty$, is composed of the
complements of all the relatively compact subsets of $\,\Xi$. We recall \cite{Bo} that the filters are partially ordered
by inclusion and that an ultrafilter is a maximal filter, i.e. a filter $\mathcal F$ that is not strictly contained in another;
equivalently, for any set $A$ one should have either $A\in\mathcal F$ or $A^c\in\mathcal F$.
Let us denote by $\delta(\Xi)$ the family of all
ultrafilters on $\Xi$ that are finer than the Fr\'echet filter. Our main result is

\begin{thm}\label{main}
Let $H$ be a self-adjoint operator in $\H$ affiliated to $\mathbf B^0(\H)$. One has
\begin{equation}\label{spess}
\spe(H)=\overline{\bigcup_{\mathcal X\in\delta(\Xi)}\sp(H_\mathcal X)},
\end{equation}
where for any $\mathcal X\in\delta(\Xi)$ one sets $H_\mathcal X:=\underset{X\rightarrow\mathcal X}{\lim}\mathbf T_X(H)$
in the strong resolvent sense.
\end{thm}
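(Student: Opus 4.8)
The plan is to realize $\spe(H)$ as the spectrum of the image of $H$ in the quotient of $\mathbf B^0(\H)$ by the compacts, and then to disintegrate that quotient through the phase-space localizations $\tau_{\mathcal X}$.

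First I would record that $\mathbf C_0(\H)$ is a closed two-sided ideal of $\mathbf B^0(\H)$: it is $\mathbf T$-invariant, and since $W$ is strongly continuous and conjugation by unitaries is norm-continuous on $\mathbf C_0(\H)$, the orbit maps of compacts are norm-continuous, so $\mathbf C_0(\H)\subseteq\mathbf B^0(\H)$. Writing $q\colon\mathbf B^0(\H)\to\mathfrak B:=\mathbf B^0(\H)/\mathbf C_0(\H)$ for the quotient map and using that $\mathfrak B$ embeds isometrically into the Calkin algebra, the standard characterization of the essential spectrum of an operator affiliated to a $C^*$-algebra containing the compacts gives $\spe(H)=\sp_{\mathfrak B}(q(H))$, where $q(H)$ is the self-adjoint element affiliated to $\mathfrak B$ obtained by pushing forward the resolvent family. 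This reduces the theorem to an identity for the spectrum of $q(H)$ in $\mathfrak B$.

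Next I would construct, for each $\mathcal X\in\delta(\Xi)$, the localization morphism $\tau_{\mathcal X}(S):=\lim_{X\to\mathcal X}\mathbf T_X(S)$, the limit taken along the ultrafilter in the strong-$*$ (equivalently ultraweak) topology; existence is guaranteed by weak-$*$ compactness of norm-balls, and the norm-continuity of $\mathbf T$ on $\mathbf B^0(\H)$ is what upgrades this bounded assignment to a genuine $*$-morphism $\tau_{\mathcal X}\colon\mathbf B^0(\H)\to\mathbf B(\H)$. Because $\mathcal X$ is finer than the Fréchet filter and the matrix coefficients $X\mapsto\la v,W(X)u\ra$ lie in $C_0(\Xi)$, one has $W(X)\to0$ weakly as $X\to\infty$, so every compact operator is sent to $0$; hence $\tau_{\mathcal X}$ annihilates $\mathbf C_0(\H)$ and factors as $\tau_{\mathcal X}=\hat\tau_{\mathcal X}\circ q$ with $\hat\tau_{\mathcal X}$ a morphism of $\mathfrak B$. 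Applying $\tau_{\mathcal X}$ to the resolvent family, the images $\tau_{\mathcal X}((H-z)^{-1})$ form a pseudo-resolvent with the correct uniform bounds and adjoint relation; by pseudo-resolvent theory this is the resolvent of a self-adjoint operator, which is exactly $H_{\mathcal X}=\lim_{X\to\mathcal X}\mathbf T_X(H)$ in the strong resolvent sense. Consequently $\sp(H_{\mathcal X})=\sp_{\mathfrak B}(\hat\tau_{\mathcal X}(q(H)))$.

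The theorem then follows from the general principle that a self-adjoint affiliated operator has spectrum equal to the closed union of the spectra of its images under any faithful family of $C^*$-morphisms. Thus it remains to prove that $\{\hat\tau_{\mathcal X}\}_{\mathcal X\in\delta(\Xi)}$ is faithful on $\mathfrak B$, i.e. $\bigcap_{\mathcal X}\ker\tau_{\mathcal X}=\mathbf C_0(\H)$: an element of $\mathbf B^0(\H)$ all of whose phase-space localizations vanish must be compact. This is the heart of the matter and the step I expect to be the main obstacle. The naive hope that $\tau_{\mathcal X}(S)=0$ for all $\mathcal X$ forces $\|\mathbf T_X(S)\|\to0$ is too crude — already for a rank-one $S$ the orbit $\mathbf T_X(S)$ keeps constant norm while all its localizations vanish — so one cannot detect compactness from the norms of the localizations alone. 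Instead I would exploit the concrete structure of $\mathbf B^0(\H)$ furnished by Rieffel quantization: realizing $\mathbf B^0(\H)$ and its ideal of compacts through the quantized phase-space algebra lets one characterize non-compactness by the persistence of mass of $S$ under conjugations $\mathbf T_X$ pushed to infinity along some diverging direction, and to capture that direction by an ultrafilter $\mathcal X\in\delta(\Xi)$ for which $\tau_{\mathcal X}(S)\neq0$. Establishing this equality, together with the verification that the $\tau_{\mathcal X}$ are genuinely multiplicative, is where the real work lies; the remaining assembly is formal.
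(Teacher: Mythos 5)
Your overall architecture is exactly the paper's: identify $\spe(H)$ with the spectrum of the image of $H$ in $\mathbf B^0(\H)/\mathbf C_0(\H)$, embed that quotient into the restricted product $\prod_{\mathcal X\in\delta(\Xi)}\mathbf B^0(\H)$ via the localization morphisms, and read off the closed union of spectra. However, there are two places where what you write does not yet amount to a proof. First, the existence of $\mathbf T_{\mathcal X}(S)$ as a \emph{morphism}: weak-$*$ compactness of norm balls does give ultraweak limits along any ultrafilter, but ultraweak limits of bounded nets are not multiplicative, and ``strong-$*$ (equivalently ultraweak)'' is false -- these topologies are not equivalent, and the whole point is to get convergence in the $^*$-strong (equivalently, on bounded sets, $\mathbf C_0(\H)$-strict) topology, where products of bounded convergent nets do converge. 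The paper's Proposition \ref{simon} supplies the missing argument: from $W(Y)\mathbf T_X(S)=\mathbf T_{X+Y}(S)W(Y)$ and the norm-continuity of $X\mapsto\mathbf T_X(S)$ one gets $\sup_X\p[W(Y)-1]\mathbf T_X(S)u\p\to0$ as $Y\to0$, so by the Riesz--Kolmogorov criterion the orbit $\{\mathbf T_X(S)u\}$ is relatively norm-compact in $\H$; applied to $S$ and $S^*$ this yields relative $^*$-strong compactness of the orbit and hence genuine $^*$-strong limits along every ultrafilter. Your vaguer appeal to ``norm-continuity upgrades this to a morphism'' gestures at this but does not do it.

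Second, and more seriously, you explicitly defer the compactness criterion $\bigcap_{\mathcal X\in\delta(\Xi)}\ker\mathbf T_{\mathcal X}=\mathbf C_0(\H)$, correctly calling it the heart of the matter, but what you offer in its place (``persistence of mass under conjugations pushed to infinity'') is a heuristic, not an argument. This is precisely Proposition \ref{dator}, and it is where all the real content of the theorem sits. The paper proves it by transporting the problem to the symbol side through $\Op$ (Proposition \ref{precisely} and relation (\ref{basic})): one invokes the known characterization of $\C_0(\Xi)$ inside the commutative algebra $\B^0(\Xi)$ by vanishing of the strict limits $\mathcal T_{\mathcal X}(f)$ for all $\mathcal X\in\delta(\Xi)$, and then shows that the kernels survive Rieffel deformation, i.e.\ $\ker[\mathfrak T_{\mathcal X}]^\infty=\ker[\mathcal T_{\mathcal X}]^\infty$, via an estimate in the Fr\'echet topology of $\B^\infty(\Xi)$ together with Lemma \ref{cannaa} (deformation commutes with intersections of invariant ideals). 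Without this step -- or some substitute for it -- your proof does not close; the remaining assembly is, as you say, formal, and matches the paper.
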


Theorem \ref{main} is modelled on previous results (see \cite{Da,GI1,GI3,HM,LS} and references therein) in which,
as a rule, $H$ has to be affiliated to the smaller algebra $\mathbf E(\H)$ defined in (\ref{smaller})
and having a crossed product structure. Under this assumption, its essential spectrum can be expressed using
limits along diverging ultrafilters $\chi$ in the configuration space $\X$ applied to $\mathbf T_{(x,0)}(H)$.

To be precise we speak of {\it full-space anisotropy} when our self-adjoint operator is affiliated to
$\mathbf B^0(\H)$ without being affiliated to the smaller $\mathbf E(\H)$; to express its essential spectrum
the aforementioned limits in the configuration space are not enough and the full strength of the result (\ref{spess}) is needed.
As a simple example meant to give some intuition, let $H=h(P)+V(Q)$ in $L^2(\mathbb R)$ be the sum between the convolution
operator $h(P)$ and the "potential" $V(Q)$ (operator of multiplication by the uniformly continuous function
$V:\mathbb R\rightarrow\mathbb R$). We assume that $h:\mathbb R^*\rightarrow\mathbb R$ is continuous and that
$\lim_{\xi\rightarrow\pm\infty}h(\xi)=a_{\pm}\in\mathbb R\cup\{\pm\infty\}$.
Then $H$ (self-adjoint on a natural domain) is affiliated to $\mathbf B^0(\H)$. It is full-space anisotropic
(not affiliated to $\mathbf E(\H)$) if and only if at least one of the limits $a_{\pm}$ is finite.

Some very partial information on full phase-space anisotropy is scattered through the existing publications and
our general result (\ref{spess}) is meant to answer a conjecture of Vladimir Georgescu.
Connected results can be found in \cite{Ma}, in which however ultrafilters are not used and only bounded operators are treated.

An important ingredient for proving Theorem \ref{main} is a workable understanding of the quotient
$\mathbf B^0(\H)/\mathbf C_0(\H)$, which is relevant because the essential spectrum of an element of
$\mathbf B^0(\H)$ (or of an operator affiliated to it) coincides with the spectrum of its canonical
image in $\mathbf B^0(\H)/\mathbf C_0(\H)$. Therefore we are going to prove the following compactness criterion,
which seems new. The limits are taken in the $^*$-strong topology or, equivalently, in the strict topology defined by
the essential ideal $\mathbf C_0(\H)$.

\begin{prop}\label{dator}
An element $S$ of $\ \mathbf B^0(\H)$ is a compact operator if and only if
$\,\underset{X\rightarrow\infty}{\lim}\mathbf T_X(S)=0$ or if
and only if $\,\underset{X\rightarrow\mathcal X}{\lim}\mathbf T_X(S)=0\,$ for all $\mathcal X\in\delta(\Xi)$.
\end{prop}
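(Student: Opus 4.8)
I will prove the two equivalences $(1)\Leftrightarrow(2)$ and $(2)\Leftrightarrow(3)$, where $(1)$ is the compactness of $S$, $(2)$ is $\lim_{X\to\infty}\mathbf T_X(S)=0$, and $(3)$ is $\lim_{X\to\mathcal X}\mathbf T_X(S)=0$ for every $\mathcal X\in\delta(\Xi)$. The basic remark is that, since each $W(X)$ is unitary, $\|\mathbf T_X(S)u\|=\|SW(-X)u\|$, so $\mathbf T_X(S)\to0$ in the $^*$-strong sense is equivalent to $SW(Y)\to0$ and $S^*W(Y)\to0$ strongly as $Y\to\infty$; moreover $\|\mathbf T_X(S)\|=\|S\|$, so all nets involved are bounded. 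For $(1)\Rightarrow(2)$ I would first treat a rank-one $S=|a\rangle\langle b|$: then $\mathbf T_X(S)=|W(X)a\rangle\langle W(X)b|$ and $\|\mathbf T_X(S)u\|=|\langle W(X)b,u\rangle|\,\|a\|$, so everything reduces to $W(X)\to0$ weakly as $X\to\infty$. This follows from the explicit formula (1.1): for $a,b\in C_c(\X)$ the overlap $\langle b,W(X)a\rangle$ vanishes once the supports of $b$ and of $a(\cdot-x)$ separate, and for $x$ bounded with $\xi\to\infty$ it vanishes by Riemann--Lebesgue; density and $\|W(X)\|=1$ give the general case. Linearity extends this to finite-rank $S$, and the isometry $\|\mathbf T_X(S)-\mathbf T_X(F)\|=\|S-F\|$ together with norm density of finite-rank operators upgrades it to all compact $S$ (the same applies to $S^*$, giving the $^*$-strong statement).

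The equivalence $(2)\Leftrightarrow(3)$ is purely about filters. $(2)\Rightarrow(3)$ is immediate, since every $\mathcal X\in\delta(\Xi)$ is finer than the Fr\'echet filter. For $(3)\Rightarrow(2)$ I argue by contraposition: if $(2)$ fails there is a $^*$-strong neighbourhood $\mathcal V$ of $0$ such that $A:=\{X:\mathbf T_X(S)\notin\mathcal V\}$ meets the complement of every relatively compact set; hence $A$ has the finite intersection property with the Fr\'echet filter and can be enlarged to an ultrafilter $\mathcal X\in\delta(\Xi)$ with $A\in\mathcal X$. Since $\mathcal X$ is an ultrafilter, $\Xi\setminus A\notin\mathcal X$, so the condition defining $\lim_{X\to\mathcal X}\mathbf T_X(S)=0$ fails for $\mathcal V$, contradicting $(3)$.

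The substantial direction is $(2)\Rightarrow(1)$, and here the hypothesis $S\in\mathbf B^0(\H)$ must be used. Fix a unit Gaussian $\phi_0$, put $P_0=|\phi_0\rangle\langle\phi_0|$, and let $c>0$ be the constant of the resolution of identity $c^{-1}\int_\Xi W(X)P_0W(-X)\,\d X=\I$ associated with the Weyl system $W$. For the cutoff $\chi_R=\mathbf 1_{\{|X|\le R\}}$ set $\Theta_R:=c^{-1}\int_\Xi\chi_R(X)\,W(X)P_0W(-X)\,\d X$; each $\Theta_R$ is a positive compact operator with $0\le\Theta_R\le\I$ and $\Theta_R\to\I$ strongly. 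Writing $S-\Theta_R S\Theta_R=(\I-\Theta_R)S+\Theta_R S(\I-\Theta_R)$, the compactness of $S$ will follow once $\|S(\I-\Theta_R)\|\to0$ (the term $\|(\I-\Theta_R)S\|=\|S^*(\I-\Theta_R)\|$ being handled identically, since $S^*\in\mathbf B^0(\H)$ also satisfies $(2)$). Using $0\le(\I-\Theta_R)^2\le\I-\Theta_R$ and the orthogonality relations for $W$ one reduces this to the tightness estimate
\begin{equation*}
\|S(\I-\Theta_R)\|^2\ \le\ \sup_{\|u\|\le1}\,c^{-1}\!\int_{|X|>R}\big|\langle SW(X)\phi_0,u\rangle\big|^2\,\d X .
\end{equation*}
For each fixed $u$ the integrand is dominated by $\|SW(X)\phi_0\|^2$, which tends to $0$ at infinity by $(2)$, while the full integral equals $c\,\|S^*u\|^2<\infty$; hence the tail vanishes. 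The whole difficulty is to make this vanishing uniform in $u$ over the unit ball.

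This uniform tightness is exactly where norm-continuity enters, and it is the step I expect to be the main obstacle. The family $\{X\mapsto\langle SW(X)\phi_0,u\rangle\}_{\|u\|\le1}$ is uniformly bounded in $L^2(\Xi)$ and satisfies $\sup_{\|u\|\le1}|\langle SW(X)\phi_0,u\rangle|=\|SW(X)\phi_0\|\to0$ at infinity; but these two facts alone are consistent with $L^2$-mass escaping thinly to infinity and do not force compactness for a general bounded operator. The membership $S\in\mathbf B^0(\H)$, that is the operator-norm continuity of $Z\mapsto\mathbf T_Z(S)$, supplies the missing ingredient: it is an equicontinuity of these coefficient functions under the phase-space translations implemented by $W$ which is uniform in $u$, and, combined with the uniform $L^2$-bound and the uniform decay at infinity, it rules out escaping mass and yields $\sup_{\|u\|\le1}\int_{|X|>R}|\langle SW(X)\phi_0,u\rangle|^2\,\d X\to0$. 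An equivalent route is to transport $S$ by the coherent-state (Bargmann) isometry $V:\H\to L^2(\Xi)$ into the integral operator with kernel $c^{-1}\langle W(X)\phi_0,SW(Y)\phi_0\rangle$, note that this kernel is bounded and vanishes whenever either variable is large, and invoke a compactness criterion for such integral operators whose kernels are moreover uniformly continuous under diagonal translations --- this last regularity again being exactly what $S\in\mathbf B^0(\H)$ provides.
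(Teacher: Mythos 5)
Your reductions $(1)\Rightarrow(2)$ (via rank-one operators, weak vanishing of $W(X)$ at infinity, and norm density of finite-rank operators) and $(2)\Leftrightarrow(3)$ (the ultrafilter-extension argument) are correct. The genuine gap is in the substantial direction $(2)\Rightarrow(1)$, and it is exactly the step you yourself flag as ``the main obstacle'': after the (correct) Cauchy--Schwarz reduction to
$\sup_{\|u\|\le1}\int_{|X|>R}|\langle SW(X)\phi_0,u\rangle|^2\,\d X\rightarrow0$,
you assert that the norm-continuity of $Z\mapsto\mathbf T_Z(S)$ ``rules out escaping mass,'' but no argument is given, and the assertion is not routine. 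For each fixed $u$ the tail vanishes simply because the full integral is finite; the whole content of compactness is the uniformity over the unit ball, and pointwise decay of $\|SW(X)\phi_0\|$ plus a uniform $L^2$ bound do not imply it. Moreover, the Riesz--Kolmogorov-type mechanism you invoke on the coherent-state side is delicate here: $W$ is only a \emph{projective} representation, so conjugation-continuity of $S$ translates into equicontinuity of the functions $X\mapsto\langle SW(X)\phi_0,u\rangle$ under \emph{twisted} translations (translations composed with $X$-dependent phases that oscillate at infinity), and the classical Riesz--Kolmogorov criterion does not apply verbatim. Making this precise is essentially equivalent to the identity $\mathbf E(\H)\cap\mathbf F(\H)=\mathbf C_0(\H)$ of Remark \ref{treica}, i.e.\ to a phase-space compactness criterion --- which is the very thing being proved. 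So as written the core implication is a plausible programme, not a proof.

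For comparison, the paper sidesteps the Hilbert-space analysis entirely: it first establishes (Proposition \ref{simon}) that the strict limits $\mathbf T_{\mathcal X}(S)$ exist, then transports the statement through $\Op$ to the symbol algebra $\mathfrak B^0(\Xi)$, quotes the \emph{commutative} fact from \cite{GI3} that $f\in\C_0(\Xi)$ iff all its translates along diverging ultrafilters vanish strictly, and finally shows that the kernels $\ker[\mathcal T_{\mathcal X}]$ pass through Rieffel deformation (Lemma \ref{cannaa} together with the Fr\'echet-convergence estimate and \cite[Prop.\ 4.13]{Rie1}). If you want to complete your direct operator-theoretic route, you would need to actually prove the uniform tightness, e.g.\ by establishing the twisted Riesz--Kolmogorov criterion for $\mathbf C_0(\H)$ first; otherwise the deformation argument is the shorter path.
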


The proof of Proposition \ref{dator} as well as certain examples to which (\ref{spess}) could be applied need the
Weyl pseudodifferential calculus \cite{Fol}, representing operators $S$ as quantizations $\Op(f)$ of functions
defined on phase space. Some useful facts about the Weyl calculus are reviewed in section \ref{sectra}.

The main feature that makes $\mathbf B^0(\Xi)$ treatable is the fact that {\it it is obtained by applying $\Op$
to the Rieffel deformation of the
Abelian $C^*$-algebra $\B^0(\Xi)$ of all bounded uniformly continuous functions on $\Xi$}. The Rieffel deformation
\cite{Rie1} is a general form of symbolic calculus associated to actions of vector groups (as $\Xi$) on $C^*$-algebras.
Although for $\B^0(\Xi)$ one actually gets the usual Weyl symbolic calculus, the approach in \cite{Rie1}
has many technical advantages. We review it briefly in section \ref{sectoral}.

In section \ref{secur} we use all the previous information to prove the compactness criterion, the embedding
of the quotient $\mathbf B^0(\H)/\mathbf C_0(\H)$ into a direct product $C^*$-algebra and, as a simple consequence,
Theorem \ref{main}. I am grateful to an anonymous referee for his/her advice, that lead to a simplification and
a clarification of these proofs.

Then we indicate briefly some extensions connected to Theorem \ref{main}.

The last two sections are dedicated to examples. Roughly, the new operators one expects to cover by this phase-space
anisotropic formalism are zero order pseudodifferential operators and classes of strictly positive order
non-elliptic operators.

We mention that many of the recent articles treating the essential spectrum of anisotropic operators have as a background
an Abelian locally compact group $\X$ \cite{GI1,GI3,Ma1}, or even rather general metric spaces $\X$ without a group structure
\cite{Da,Ge}. As mentioned before, the results are essentially confined to the restricted configurational isotropy due to
the use of crossed products. Rieffel's calculus has been partially extended in \cite{Ka} to actions of Abelian
locally compact groups on $C^*$-algebras and this could probably
be used with extra effort to treat operators with a complicated phase-space behavior in such a framework.

This short paper is not the right opportunity to draw the history of studying the essential spectrum with
(or without) algebraic techniques. Beside the articles already quoted, we send also to \cite{ABG,LN,MPR,RRR,RRS}
and to references therein for other results.

\section{Rieffel calculus}\label{sectoral}

Rieffel deformation \cite{Rie1} is an exact functor between categories of $C^*$-dynamical systems with group $\mathbb R^d$.
Reducing the generality to fit to the present framework, assume that $(\A,\Th,\Xi)$ is a $C^*$-dynamical system,
i.e. the vector group $\Xi$ acts strongly continuously by automorphisms on the $C^*$-algebra $\A$.
On the $C^\infty$ vectors $\Ai$ of the action one uses the
symplectic form on $\Xi$ to deform the initial product to a new one (oscillatory integrals)
\begin{equation}\label{iffel}
f\#g:=2^{2n}\int_\Xi\int_\Xi dY dZ e^{2i[\![Y,Z]\!]}\Th_Y(f)\Th_Z(g).
\end{equation}
Keeping the same involution, one gets a $^*$-algebra structure on $\Ai$ which can be completed under a $C^*$-norm
by techniques involving Hilbert modules. The action $\Th$, restricted to $\Ai$, extends to an action  of $\Xi$
on the resulting $C^*$-algebra $\A^R$ that will
be denoted by $\Theta^R$. The new space of smooth vectors $(\A^R)^\infty$ actually coincides with $\Ai$ cf.
\cite[Th. 7.1]{Rie1}, and even the natural Fr\'echet topologies on this space are the same.
We mean by this that the family of semi-norms
\begin{equation}\label{semicar}
\parallel f\parallel_\A^{(j)}:=\sum_{|\alpha|\le
j}\frac{1}{|\alpha|!}\parallel\partial_X^\alpha\left[\Th_X(f)\right]_{X=0}\parallel_\A\,,\ \ \ \ \ \ j\in\N
\end{equation}
is equivalent to the one given by an analogous expression with $\p\cdot\p_\A$ replaced by $\p\cdot\p_{\A^R}$.

The correspondence $\A\mapsto\A^R$ can be raised to a correspondence between equivariant morphisms,
cf \cite[Th. 5.7]{Rie1}: If $(\A,\Th,\Xi)$
and $(\B,\Gamma,\Xi)$ are $C^*$-dynamical systems and $\P:\A\rightarrow\B$ is a morphism satisfying
$\Gamma_X\circ\P=\P\circ\Th_X$ for any $X\in\Xi$, it restricts to a map $\P:\Ai\rightarrow\B^\infty$
which then extends to a morphism $\P^R:\A^R\rightarrow\B^R$.
We emphasize that on the common dense $^*$-subalgebra $(\A^R)^\infty=\Ai$
the actions and the morphisms coincide: $\Th^R_X|_{\Ai}=\Th_X|_{\Ai}$ and $\P^R|_{\Ai}=\P|_{\Ai}$.

Equally important \cite[Prop. 5.9]{Rie1}, any (closed two-sided) ideal $\K$ of $\A$ which is invariant under the action
$\Th$ is converted by deformation into an invariant ideal $\K^R$ of $\A^R$.

We now describe the Rieffel quantization of an intersection of ideals. For any element $j$ of a set $J$
we are given a $\Th$-invariant ideal $\K_j$ of $\A$; thus we also have the $\Th^R$-invariant ideal $\K_j^R$ of $\A^R$.

\begin{lem}\label{cannaa}
One has $\,\[\bigcap_j\K_j\]^R=\,\bigcap_j\K_j^R$\,.
\end{lem}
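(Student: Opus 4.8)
The plan is to push the whole statement down to the common dense subalgebra of smooth vectors $\Ai=(\A^R)^\infty$, on which the deformed and undeformed structures agree and on which the identity for intersections is a triviality. Write $\K:=\bigcap_j\K_j$; this is again a $\Th$-invariant ideal, so $\K^R$ is well defined. I would start from Rieffel's construction, which realizes $\K^R$ as the closure in $\A^R$ of its smooth part $\K\cap\Ai$, and likewise presents each $\K_j^R$ as the closure in $\A^R$ of $\K_j\cap\Ai$. Since $\K\cap\Ai=\bigl(\bigcap_j\K_j\bigr)\cap\Ai=\bigcap_j(\K_j\cap\Ai)$ is a purely set-theoretic equality, the entire content of the lemma sits in the interaction between these $\A^R$-norm closures and the set-theoretic intersection.

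The inclusion $\bigl[\bigcap_j\K_j\bigr]^R\subseteq\bigcap_j\K_j^R$ should then be immediate. For every $j$ one has $\K\cap\Ai\subseteq\K_j\cap\Ai\subseteq\K_j^R$, hence $\K\cap\Ai\subseteq\bigcap_j\K_j^R$; as the right-hand side is norm-closed in $\A^R$, passing to closures yields $\K^R\subseteq\bigcap_j\K_j^R$.

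The reverse inclusion is the substantial part, and the key intermediate fact I would isolate is the identity $\K_j^R\cap\Ai=\K_j\cap\Ai$ for each fixed $j$. Here exactness of Rieffel's functor enters: deforming $0\to\K_j\to\A\xrightarrow{\pi_j}\A/\K_j\to0$ gives $\K_j^R=\ker\bigl(\pi_j^R:\A^R\to(\A/\K_j)^R\bigr)$, while the general principle $\P^R|_{\Ai}=\P|_{\Ai}$ forces $\pi_j^R$ and $\pi_j$ to coincide on $\Ai$. Consequently, for $a\in\Ai$ one has $a\in\K_j^R$ iff $\pi_j^R(a)=0$ iff $\pi_j(a)=0$ iff $a\in\K_j$, which is exactly the claimed identity. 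I expect this to be the main obstacle, precisely because the $\A^R$-norm and the $\A$-norm differ, so one cannot identify $\K_j^R\cap\Ai$ with $\K_j\cap\Ai$ by a naive density argument; realizing $\K_j^R$ as the kernel of a deformed morphism, and using that the morphism is undeformed on $\Ai$, is what circumvents this difficulty.

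Finally, to establish $\bigcap_j\K_j^R\subseteq\K^R$, I would take $a\in\bigcap_j\K_j^R$ and smooth it by the standard Gårding mollification $a_\varphi:=\int_\Xi\varphi(X)\,\Th^R_X(a)\,\d X$, with $\varphi$ an approximate identity supported near the origin. Because $\Th^R$ is strongly continuous and each $\K_j^R$ is $\Th^R$-invariant and norm-closed, one gets $a_\varphi\in\Ai\cap\bigcap_j\K_j^R$ and $a_\varphi\to a$ in $\A^R$. The identity of the previous paragraph together with the trivial set equality then gives $a_\varphi\in\bigcap_j(\K_j\cap\Ai)=\K\cap\Ai\subseteq\K^R$, and letting $\varphi$ run through the approximate identity while invoking the closedness of $\K^R$ yields $a\in\K^R$, which completes the argument.
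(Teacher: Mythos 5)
Your proof is correct and follows essentially the same strategy as the paper: both reduce the statement to the common dense subalgebra of smooth vectors, where the deformed and undeformed ideals coincide and the intersection identity is set-theoretic. The only difference is cosmetic — you justify the key identity $\K_j^R\cap\Ai=\K_j\cap\Ai$ by exactness of the functor and $\pi_j^R|_{\Ai}=\pi_j|_{\Ai}$, and make the density of smooth vectors explicit via G\aa rding mollification, whereas the paper simply invokes the facts recorded earlier in the section (in particular \cite[Th. 7.1]{Rie1}) and the density of the smooth subalgebra in a closed invariant ideal.
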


\begin{proof}
Both sides are $\Th^R$-invariant (closed bi-sided) ideals in $\A^R$. It will be enough to check that their $^*$-subalgebras
of smooth vectors coincide. Using the results mentioned before in this section, one can write:
$$
\big(\big[\bigcap_j\K_j\big]^R\big)^\infty=\big(\bigcap_j\K_j\big)^\infty=\bigcap_j\K_j^\infty=
\bigcap_j\big(\K_j^R\big)^\infty=\big(\bigcap_j\K_j^R\big)^\infty
$$
and we are done.
\end{proof}

\begin{rem}\label{simica}
Rieffel deformation is an almost symmetric procedure. Applying it to $\A^R$ but with the symplectic form $[\![\cdot,\cdot]\!]$
replaced by $-[\![\cdot,\cdot]\!]$, one recovers the initial $C^*$-algebra $\A$.
This follows from \cite[Th. 7.5]{Rie1}.
\end{rem}

The relevant example for us is $\A=\B^0(\Xi)$, the $C^*$-algebra of all bounded uniformly continuous functions on $\Xi$,
acted continuously by $\Xi$ by translations ($\Th=\mathcal T$):
$$
f(\cdot)\rightarrow[\mathcal T_X(f)](\cdot):=f(\cdot-X)\,\ \quad X\in\Xi.
$$
In this case $\Ai=:\B^\infty(\Xi)$ is formed of all the $C^\infty$ functions $f:\Xi\rightarrow\mathbb C$ with all
the partial derivatives bounded; the traditional notation in pseudodifferential theory is $S^0_{0,0}(\Xi)$.
On $\B^\infty(\Xi)$ Rieffel's composition law $\#$ coincides with the Weyl multiplication $\sharp$; see \cite{Fol}.

Rieffel's deformation of $\B^0(\Xi)$ will be denoted by $\mathfrak B^0(\Xi)$;
it forms an operator algebra extension of the zero order pseudodifferential symbols, having full phase-space anisotropy.
Elements of the H\"ormander spaces $S^{-m}_{\rho,\delta}(\Xi)$, $m>0$ of strictly negative order
could be considered trivial at infinity with respect to
$\xi\in\X^*$, having interesting (anisotropic) asymptotic behavior only in $x\in\X$; they generate the
$C^*$-algebra $\mathfrak E(\Xi)$ of Remark \ref{doica}.

We are going to denote by $\mathfrak T:=\mathcal T^R$ the action of $\Xi$ on $\mathfrak B^0(\Xi)$ obtained from $\mathcal T$ by
Rieffel deformation. But it is easy to see that $\mathfrak B^0(\Xi)$ is entirely composed of temperate distributions and that
$\mathfrak T_X$ is just translation with $X$ restricted from the dual of the Schwartz space (see below).

\section{Hilbert space representations}\label{sectra}

We recall some basic facts about the Weyl calculus. A correspondence between functions (and distributions)
$f$ on the phase space $\Xi$ and operators $\Op(f)$ acting on functions on the configuration space $\X$ is given formally by
\begin{equation}\label{op}
[\Op(f)u](x):=\int_\X dy\int_{\X^*}\!\! d\eta\,e^{iy\cdot\eta}f\left(\frac{x+y}{2},\eta\right)u(y).
\end{equation}
Various interpretations \cite{Fol} can be given to (\ref{op}) under various assumptions on $f$ and $u$.
We notice only that $\Op$ defines an isomorphism between the space of tempered distributions $\S'(\Xi)$ and the
space $\mathbf L[\S(\X);\S'(\X)]$ of linear continuous operators from the Schwartz space $\S(\X)$ to its dual $\S'(\X)$.
It also restricts to an isomorphism $\Op:\S(\Xi)\rightarrow\mathbf L[\S'(\X);\S(\X)]$. On various subspaces of $\S'(\Xi)$
one introduces the multiplication $\sharp$ (Weyl composition) satisfying $\Op(f)\Op(g)=\Op(f\sharp g)$.
One of these spaces is $\S(\Xi)$, a (Fr\'echet) $^*$-algebra under $\sharp$ and complex conjugation.

It is easy to show that any $\mathbf T_X$ (introduced at (\ref{auto})) will define automorphisms of
$\,\mathbf L[\S(\X),\S'(\X)]$ and of $\,\mathbf L[\S'(\X),\S(\X)]$. The next relation, easy to check on $\S'(\Xi)$, is basic:
\begin{equation}\label{basic}
\mathbf T_X\circ\Op=\Op\circ\mathcal T_X,\ \quad X\in\Xi.
\end{equation}
When written on the subspace $\mathfrak B^0(\Xi)$, the automorphism $\mathcal T_X$ can be replaced by $\mathfrak T_X$.

Since $\B^0(\Xi)$ possesses the essential invariant ideal $\C_0(\Xi)$ of continuous functions on $\Xi$ that are small
at infinity, one gets by deformation \cite[Prop. 5.9]{Rie1} an essential invariant ideal $\mathfrak C_0(\Xi)$
inside $\mathfrak B^0(\Xi)$. On $\mathfrak B^0(\Xi)$ the seminorms
\begin{equation}\label{strict2}
\left\{\p f\p_{\mathfrak B^0(\Xi)}^h:=\,\p f\sharp \,h\p_{\mathfrak B^0(\Xi)}+
\p h\sharp \,f\p_{\mathfrak B^0(\Xi)}\ \mid\ h\in\mathfrak C_0(\Xi)\right\}
\end{equation}
define the strict topology associated to the essential ideal $\mathfrak C_0(\Xi)$. We are going to denote by
$\mathfrak B^0(\Xi)_{{\rm str}}$ the space $\mathfrak B^0(\Xi)$ endowed with this topology.
Let us also set $\mathbf B^0(\H)_{{\rm str}}$ for the space $\mathbf B^0(\H)$ with the strict topology associated
to the essential ideal $\mathbf C_0(\H)$ of compact operators on $\H$, via the family of seminorms
\begin{equation}\label{strict3}
\left\{\p S\p_{\mathbf B(\H)}^K:=\,\p KS\p_{\mathbf B(\H)}+\p SK\p_{\mathbf B(\H)}\,\mid K\in\mathbf C_0(\H)\right\}.
\end{equation}

\begin{prop}\label{precisely}
\begin{enumerate}
\item
$\Op$ realizes a $C^*$-isomorphism between $\mathfrak{B\,}^0(\Xi)$ and $\,\mathbf B^0(\H)$.
\item
The image of $\,\mathfrak C_0(\Xi)$ through $\Op$ is precisely $\mathbf C_0(\H)$.
\item
The mapping $\Op:\mathfrak B^0(\Xi)_{{\rm str}}\rightarrow \mathbf B^0(\H)_{{\rm str}}$ is an isomorphism.
\end{enumerate}
\end{prop}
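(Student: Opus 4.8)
The plan is to establish the three assertions in turn, treating part (1) as the core and deriving (2) and (3) from it. For part (1), I would first observe that on the common algebra of smooth vectors $\B^\infty(\Xi)=\big(\mathfrak B^0(\Xi)\big)^\infty=S^0_{0,0}(\Xi)$ Rieffel's product $\#$ coincides with the Weyl product $\sharp$ and the involution is complex conjugation in both pictures; since $\Op(f\sharp g)=\Op(f)\Op(g)$ and $\Op(\bar f)=\Op(f)^*$, the restriction $\Op:\B^\infty(\Xi)\to\mathbf B(\H)$ is a $^*$-homomorphism from a dense $^*$-subalgebra of the $C^*$-algebra $\mathfrak B^0(\Xi)$ into a $C^*$-algebra. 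Such a map is automatically norm-decreasing (the spectral radius of a self-adjoint element cannot increase under a $^*$-homomorphism), so it extends to a contractive $^*$-homomorphism $\overline{\Op}:\mathfrak B^0(\Xi)\to\mathbf B(\H)$. Recalling that every element of $\mathfrak B^0(\Xi)$ is a tempered distribution and that $\overline{\Op}$ agrees on the dense smooth subalgebra with the global Weyl isomorphism $\Op:\S'(\Xi)\to\mathbf L[\S(\X);\S'(\X)]$, one checks $\overline{\Op}$ is the restriction of that isomorphism, hence injective; an injective $^*$-homomorphism of $C^*$-algebras is isometric. Thus $\overline{\Op}$ is a $C^*$-isomorphism of $\mathfrak B^0(\Xi)$ onto the closed subalgebra $\overline{\Op(\B^\infty(\Xi))}$ of $\mathbf B(\H)$. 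That this image lies in $\mathbf B^0(\H)$ follows from (\ref{basic}): for $f\in\mathfrak B^0(\Xi)$ one has $\mathbf T_X(\Op(f))=\Op(\mathfrak T_Xf)$, and since $\mathfrak T$ is strongly continuous and $\overline{\Op}$ is isometric, $X\mapsto\mathbf T_X(\Op(f))$ is norm-continuous.

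The reverse inclusion $\mathbf B^0(\H)\subseteq\Op(\mathfrak B^0(\Xi))$ is where the real work lies, and I expect it to be the main obstacle. Given $S\in\mathbf B^0(\H)$, write $S=\Op(f)$ with $f\in\S'(\Xi)$ and smooth it: for $\phi\in\S(\Xi)$ set $S_\phi:=\int_\Xi\phi(X)\,\mathbf T_X(S)\,\de X$, a norm-convergent Bochner integral because the orbit is norm-continuous and bounded. Using (\ref{basic}) one gets $S_\phi=\Op(\phi*f)$, where $*$ is ordinary convolution on $\Xi$. The crucial claim is that $\phi*f$ in fact belongs to $\B^\infty(\Xi)$. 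To prove it I would invoke the Weyl trace pairing: for $\psi\in\S(\Xi)$ there is a trace-class operator $A_\psi$ depending linearly on $\psi$ (since $\Op$ sends $\S(\Xi)$ into regularizing, hence trace-class, operators) with $(\psi*f)(X)=c_n\,\mathrm{Tr}\big(\Op(f)\,\mathbf T_X(A_\psi)\big)=c_n\,\mathrm{Tr}\big(\mathbf T_{-X}(S)\,A_\psi\big)$, whence $\sup_X|(\psi*f)(X)|\le c_n\,\|S\|\,\|A_\psi\|_1$. Applying this with $\psi=\partial^\alpha\phi$ bounds every derivative of $\phi*f$ uniformly in $X$, so $\phi*f\in S^0_{0,0}(\Xi)=\B^\infty(\Xi)$ and therefore $S_\phi\in\Op(\mathfrak B^0(\Xi))$. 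Finally, choosing an approximate identity $\phi_\varepsilon$ yields $S_{\phi_\varepsilon}\to S$ in norm, placing $S$ in the closed image; this proves surjectivity and completes (1).

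For part (2), I would first note $\mathbf C_0(\H)\subseteq\mathbf B^0(\H)$, since conjugating a compact operator by the strongly continuous unitaries $W(X)$ is norm-continuous. Because $\Op(\S(\Xi))$ consists of regularizing (in particular Hilbert--Schmidt, hence compact) operators, and $\S(\Xi)$ is a dense $^*$-subalgebra of the ideal $\mathfrak C_0(\Xi)$, part (1) shows that $\Op(\mathfrak C_0(\Xi))$ is a nonzero closed two-sided ideal of $\mathbf B^0(\H)$ contained in $\mathbf C_0(\H)$, hence an ideal of $\mathbf C_0(\H)$. As $\mathbf C_0(\H)$ is simple, this forces $\Op(\mathfrak C_0(\Xi))=\mathbf C_0(\H)$.

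Part (3) is then immediate. By (1) and (2), $\Op$ carries the essential ideal $\mathfrak C_0(\Xi)$ isometrically onto $\mathbf C_0(\H)$, and for $f\in\mathfrak B^0(\Xi)$, $h\in\mathfrak C_0(\Xi)$ one has $\|f\sharp h\|=\|\Op(f)\Op(h)\|$ and $\|h\sharp f\|=\|\Op(h)\Op(f)\|$, with $K:=\Op(h)$ ranging over all of $\mathbf C_0(\H)$ as $h$ ranges over $\mathfrak C_0(\Xi)$. Hence the seminorms (\ref{strict2}) are transported exactly onto the seminorms (\ref{strict3}), so $\Op:\mathfrak B^0(\Xi)_{\mathrm{str}}\to\mathbf B^0(\H)_{\mathrm{str}}$ is a homeomorphism.
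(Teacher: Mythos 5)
Your overall architecture matches the paper's: isometry of $\Op$ on the dense smooth subalgebra plus a regularization argument for surjectivity in part (1); density of $\Op[\S(\Xi)]$ in the compacts for part (2) (your appeal to simplicity of $\mathbf C_0(\H)$ is a clean, equivalent variant of the paper's density argument); and transport of the defining seminorms for part (3). Your surjectivity argument --- writing $S=\Op(f)$ with $f\in\S'(\Xi)$, forming $S_\phi=\int_\Xi\phi(X)\mathbf T_X(S)\,\de X=\Op(\phi*f)$, and controlling all derivatives of $\phi*f$ uniformly via the trace pairing $|(\psi*f)(X)|\le c\,\p S\p\,\p A_\psi\p_1$ --- is correct and usefully fleshes out what the paper dismisses as "easy". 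Parts (2) and (3) are fine as written.

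There is, however, a genuine gap at the one point where the paper deliberately leans on external references ([Ma, Prop.~2.6] and [BM]): your claim that the $^*$-homomorphism $\Op:(\B^\infty(\Xi),\#)\to\mathbf B(\H)$ is \emph{automatically} norm-decreasing for the Rieffel $C^*$-norm because it is a $^*$-homomorphism from a dense $^*$-subalgebra. That general principle is false for dense \emph{incomplete} subalgebras: the spectral-radius argument gives $\p\Op(a)\p\le r_{\B^\infty(\Xi)}(a^*a)^{1/2}$, where the spectral radius is computed relative to the (non-complete) subalgebra $\B^\infty(\Xi)$, and this can strictly exceed $\p a\p_{\mathfrak B^0(\Xi)}$ unless one knows that $\B^\infty(\Xi)$ is spectrally invariant in $\mathfrak B^0(\Xi)$. (Compare the polynomials inside $C([0,1])$ mapped identically into $C([0,2])$: a $^*$-homomorphism from a dense $^*$-subalgebra that is not contractive.) Everything downstream --- the existence of the continuous extension $\overline{\Op}$, its injectivity, and hence the isometry --- hinges on this step. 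To close the gap you must either invoke the stability of the smooth vectors under holomorphic functional calculus in the Rieffel deformation (which yields the needed spectral invariance), or simply cite the faithfulness/isometry result of [Ma, Prop.~2.6] or [BM] as the paper does; the Calder\'on--Vaillancourt theorem alone only bounds $\p\Op(f)\p$ by Fr\'echet seminorms of $f$, not by the Rieffel norm.
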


\begin{proof}
The $C^*$-algebra $\mathfrak{B\,}^0(\Xi)$ contains the $^*$-subalgebra $\B^\infty(\Xi)$ densely.
By the Calderon-Vaillancourt Theorem \cite{Fol}, $\Op:\B^\infty(\Xi)\rightarrow\mathbf B(\H)$ is a well-defined representation.
In \cite[Prop. 2.6]{Ma} it is shown that it extends to a faithful representation
$\Op:\mathfrak{B\,}^0(\Xi)\rightarrow\mathbf B(\H)$.
(The isometry of $\Op$ with respect to the Rieffel norm $\p\cdot\p_{\mathfrak{B\,}^0(\Xi)}$ is also proven in a different way
in \cite{BM}.) Then the relation (\ref{basic}) and the surjectivity of $\Op:\S'(\Xi)\rightarrow\mathbf L[\S(\X);\S'(\X)]$
easily leads to $\Op\[\mathfrak B^0(\Xi)\]=\mathbf B^0(\H)$.

The second point follows from the fact that $\Op[\S(\Xi)]$ is dense in $\mathbf C_0(\H)$;
use also the density of $\S(\Xi)$ in the Fr\'echet topology of $\C_0(\Xi)^\infty=\mathfrak C_0(\Xi)^\infty$,
which is dense in $\mathfrak C_0(\H)$.

The third statement should already be clear. Working with the seminorms for instance, one shows immediately that
$\parallel \!\Op(f)\!\parallel_{\mathbf B^0(\H)}^{\Op(h)}=\,\p \!f\!\p_{\mathfrak B^0(\Xi)}^h$
for $f\in \mathfrak B^0(\Xi)$ and $h\in \mathfrak C_0(\Xi)$.
This follows from the definitions, from the points 1 and 2 and from the relations
$\Op(f)\Op(h)=\Op(f\sharp \,h)$ and $\Op(h)\Op(f)=\Op(h\sharp \,f)$.
\end{proof}

\section{Proofs}\label{secur}

An ingredient for proving Theorem \ref{main} is

\begin{prop}\label{simon}
Let $S\in\mathbf B^0(\H)$ and let $\mathcal U$ be an ultrafilter on $\Xi$. Then
$\mathbf T_\mathcal U(S):=\underset{X\rightarrow\mathcal U}{\lim}\mathbf T_X(S)$
exists in the $\mathbf C_0(\H)$-strict topology or, equivalently, in the $^*$-strong topology.

It defines a morphism $\mathbf T_\mathcal U:\mathbf B^0(\H)\rightarrow\mathbf B^0(\H)$.
\end{prop}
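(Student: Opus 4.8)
The plan is to establish convergence in the $^*$-strong topology, which on the norm-bounded orbit $\{\mathbf T_X(S):X\in\Xi\}$ coincides with the $\mathbf C_0(\H)$-strict topology, and then to verify that the resulting map is a morphism of $\mathbf B^0(\H)$. Since $\|\mathbf T_X(S)\|=\|S\|$ for every $X$, $^*$-strong convergence along $\mathcal U$ amounts to the existence, for each $v\in\H$, of the norm limits $\lim_{X\to\mathcal U}\mathbf T_X(S)v$ and $\lim_{X\to\mathcal U}\mathbf T_X(S)^*v=\lim_{X\to\mathcal U}\mathbf T_X(S^*)v$ in $\H$ (using $\mathbf T_X(S)^*=\mathbf T_X(S^*)$, immediate from $W(-X)^*=W(X)$). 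Because an ultrafilter concentrated on a relatively compact subset of the complete space $\H$ automatically converges, the whole matter reduces to one compactness statement, which I regard as the heart of the argument and the main obstacle:

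\emph{for every $S\in\mathbf B^0(\H)$ and $v\in\H$, the orbit $\{\mathbf T_X(S)v:X\in\Xi\}$ is relatively norm-compact in $\H$.}

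This cannot follow from norm-continuity of $X\mapsto\mathbf T_X(S)$ alone: the operator orbit $\{\mathbf T_X(S)\}$ is in general not norm-precompact in $\mathbf B(\H)$ (already for a multiplication operator $S=\phi(Q)$ by a bounded uniformly continuous, non almost periodic $\phi$, where $\mathbf T_{(x,\xi)}(S)=\phi(Q-x)$), and it is precisely the pairing with the fixed vector $v$ that restores compactness. To prove the statement I would first reduce, via the uniform bound $\sup_X\|\mathbf T_X(S-S')v\|\le\|S-S'\|\,\|v\|$ together with the isometry and density assertions of Proposition \ref{precisely} (which give $\Op(\B^\infty(\Xi))$ dense in $\mathbf B^0(\H)$ and $\S(\X)$ dense in $\H$), to the case $S=\Op(f)$ with $f\in\B^\infty(\Xi)=S^0_{0,0}(\Xi)$ and $v\in\S(\X)$; total boundedness is stable under uniform approximation of the families, and total boundedness in complete $\H$ is relative compactness. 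For such $S$ relation (\ref{basic}) gives $\mathbf T_X(S)=\Op(\mathcal T_Xf)$, and the $S^0_{0,0}$-seminorms of the translates $\mathcal T_Xf$ are independent of $X$. I would then invoke the Fr\'echet--Kolmogorov--Riesz criterion in $L^2(\X)$ --- a bounded family is relatively compact once it is uniformly localized both in position and in momentum --- and verify the two localizations from the rapid off-diagonal decay of the Weyl kernel of $\Op(\mathcal T_Xf)$, which is uniform in $X$, the momentum side following from the $Q\leftrightarrow P$ (Fourier) symmetry of the Weyl calculus. This kernel/localization analysis is the technically delicate point.

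Granting the compactness statement, I would set $\mathbf T_\mathcal U(S)v:=\lim_{X\to\mathcal U}\mathbf T_X(S)v$; linearity of ultrafilter limits makes this a linear operator with $\|\mathbf T_\mathcal U(S)\|\le\|S\|$, and the parallel adjoint limits identify $\mathbf T_\mathcal U(S)^*$ and upgrade the convergence $\mathbf T_X(S)\to\mathbf T_\mathcal U(S)$ to the $^*$-strong, hence strict, topology. That $\mathbf T_\mathcal U(S)$ again lies in $\mathbf B^0(\H)$ follows from the true-action identity $\mathbf T_Y\mathbf T_X=\mathbf T_{X+Y}$ coming from (\ref{project})--(\ref{auto}): since $\|\mathbf T_{X+Y}(S)-\mathbf T_X(S)\|=\|\mathbf T_Y(S)-S\|=:\epsilon(Y)$ uniformly in $X$, passing to the $^*$-strong limit yields $\|\mathbf T_Y(\mathbf T_\mathcal U(S))-\mathbf T_\mathcal U(S)\|\le\epsilon(Y)\to 0$, so the new orbit is norm-continuous with the same modulus as that of $S$.

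The morphism property is then essentially automatic, all convergence being $^*$-strong on norm-bounded sets. Linearity and contractivity are clear; the involution is respected because the adjoint is $^*$-strongly continuous and $\mathbf T_X(S^*)=\mathbf T_X(S)^*$; and multiplicativity holds because multiplication is jointly $^*$-strongly continuous on bounded sets while each $\mathbf T_X$ is a homomorphism, whence $\mathbf T_\mathcal U(ST)=\lim_{X\to\mathcal U}\mathbf T_X(S)\mathbf T_X(T)=\mathbf T_\mathcal U(S)\,\mathbf T_\mathcal U(T)$. Thus $\mathbf T_\mathcal U:\mathbf B^0(\H)\to\mathbf B^0(\H)$ is a well-defined morphism, with the only genuine work concentrated in the relative compactness of the vector-orbit.
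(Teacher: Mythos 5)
Your overall architecture coincides with the paper's: everything reduces to showing that the vector orbit $\{\mathbf T_X(S)v : X\in\Xi\}$ is relatively norm-compact in $\H$ (and likewise for $S^*$), after which completeness of $\H$, the ultrafilter property, lower semicontinuity of the norm under strong limits, the identity $\mathbf T_Y\mathbf T_X=\mathbf T_{X+Y}$ and joint $^*$-strong continuity of multiplication on bounded sets give everything else exactly as you describe; that part of your write-up is fine and agrees with the paper. The divergence --- and the problem --- is in how the compactness itself is obtained. You propose to reduce by density to $S=\Op(f)$ with $f\in S^0_{0,0}(\Xi)$ and $v\in\S(\X)$, and to verify the two Kolmogorov--Riesz localizations ``from the rapid off-diagonal decay of the Weyl kernel''. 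For a general $f\in S^0_{0,0}$ the Weyl kernel $K(x,y)=(\mathcal F_2^{-1}f)\bigl(\tfrac{x+y}{2},x-y\bigr)$ is only a distribution (take $f(x,\xi)=a(\xi)$ bounded: then $K(x,y)=\check a(x-y)$), and integrating by parts in $\eta$ trades powers of $x-y$ for derivatives $\partial_\eta^\alpha f$ that are still merely bounded, so no pointwise off-diagonal decay is available. A correct completion along your lines does exist (e.g.\ uniform continuity of $\Op(g):\S(\X)\to\S(\X)$ in the $S^0_{0,0}$ seminorms of $g$, or conjugation estimates for $\langle Q\rangle^N\Op(g)\langle Q\rangle^{-N}$), but as written this step --- which you yourself flag as the crux --- is both unexecuted and motivated by a kernel estimate that fails in this symbol class.

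The paper closes exactly this gap with a short argument that needs no pseudodifferential calculus and no density reduction. From (\ref{project})--(\ref{auto}) one has $W(Y)\mathbf T_X(S)=\mathbf T_{X+Y}(S)W(Y)$, hence
$$[W(Y)-1]\,\mathbf T_X(S)=[\mathbf T_{X+Y}(S)-\mathbf T_X(S)]W(Y)+\mathbf T_X(S)[W(Y)-1],$$
and since $\mathbf T_{X+Y}(S)-\mathbf T_X(S)=\mathbf T_X[\mathbf T_Y(S)-S]$ has norm $\|\mathbf T_Y(S)-S\|\to 0$ by the very definition of $\mathbf B^0(\H)$, while $W$ is strongly continuous, one gets $\sup_X\|[W(Y)-1]\mathbf T_X(S)v\|\to 0$ as $Y\to 0$. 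Taking $Y=(y,0)$ gives the uniform translation-equicontinuity and $Y=(0,\eta)$ the uniform tightness of the bounded family $\{\mathbf T_X(S)v\}$, which is precisely the Riesz--Kolmogorov criterion in $L^2(\X)$. In other words, the hypothesis $S\in\mathbf B^0(\H)$ \emph{is} the uniform localization estimate you were trying to extract from the symbol class; using it directly is both shorter and more general, since it covers all of $\mathbf B^0(\H)$ without appealing to Proposition \ref{precisely}.
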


Before starting the proof we must recall a criterion of compactness due to Riesz and Kolmogorov, in the form
\cite[Th.3.4]{GI2} needed here: {\it A bounded subset $M$ of $\H=L^2(\X)$ is relatively compact if and only if
$\underset{Y\rightarrow 0}{\lim}\,\underset{v\in M}{\sup}\p [W(Y)-1]v\p=0$.}

\begin{proof}
By Lemma C.6 in \cite{RaW}, on norm-bounded subsets of $\mathbf B(\H)$ the $\mathbf C_0(\Xi)$-strict topology
coincides with the $^*$-strong topology, which will be used below.

From (\ref{auto}) and (\ref{project}) it follows that
\begin{equation}
W(Y)\mathbf T_X(S)=\mathbf T_{X+Y}(S)W(Y),\quad\ \forall\,X,Y\in\Xi,
\end{equation}
which implies that
$$
W(Y)\mathbf T_X(S)-\mathbf T_X(S)=[\mathbf T_{X+Y}(S)-\mathbf T_X(S)]W(Y)+\mathbf T_X(S)[W(Y)-1].
$$
Pick a vector $u\in\H$, recall that $W(\cdot)$ is strongly continuous, $S$ belongs to $\mathbf B^0(\H)$ and
$\mathbf T_{X+Y}(S)=\mathbf T_X[\mathbf T_Y(S)]$. Then immediately
\begin{equation}\label{imedi}
\lim_{Y\rightarrow 0}\sup_{X\in\Xi}\p [W(Y)-1]\mathbf T_X(S)u\p\,=0,
\end{equation}
implying by the Riesz-Kolmogorov criterion that the bounded set $M:=\{\mathbf T_X(S)u\mid X\in\Xi\}$ is relatively compact in $\H=L^2(\X)$.
This can be done also for $S^*\in\mathbf B^0(\H)$. It follows that
$\mathbf T_\mathcal U(S):=\mathbf C_0\!-\!\!\underset{X\rightarrow\mathcal U}{\lim}\mathbf T_X(S)\in\mathbf B^0(\H)$ exists $^*$-strongly
for every ultrafilter $\mathcal U$, in particular for the elements of $\delta(\Xi)$.

It is easy to see that it defines a morphism $\mathbf T_\mathcal U:\mathbf B^0(\H)\rightarrow\mathbf B^0(\H)$.
\end{proof}

We continue by proving Proposition \ref{dator}, relying partly on the techniques developed in \cite{Rie1} suitably adapted
to our setting and notations.

\begin{proof}
We already know from Proposition \ref{simon} that for $S\in\mathbf B^0(\H)$ and $\mathcal X\in \delta(\Xi)$
the limit $\mathbf T_\mathcal X(S)$ exists. Since every filter is the intersection of the ultrafilters containing it,
then $S\in\bigcap_{\mathcal X\in\delta(\Xi)}\ker[\mathbf T_\mathcal X]$ if and only if
$\mathbf C_0\!-\!\!\underset{X\rightarrow\infty}{\lim}\mathbf T_X(S)=0$.

By taking into account (\ref{basic}) and Proposition \ref{precisely}, it remains to show for an element
$f\in\mathfrak B^0(\Xi)$ that $f\in\mathfrak C_0(\Xi)$ if and only if
$\mathfrak T_\mathcal X(f):=\,\mathfrak C_0\!-\!\underset{X\rightarrow\mathcal X}{\lim}\mathfrak T_X(f)=0$
for all $\mathcal X\in\delta(\Xi)$.

We learn from \cite[Sect. 5.1]{GI3} that an element $f\in\B^0(\Xi)$ belongs to $\C_0(\Xi)$ iff
$\,\C_0\!-\!\underset{X\rightarrow\mathcal X}{\lim}\mathcal T_X(f)=0$ for all $\mathcal X\in\delta(\Xi)$\,.
We referred to the limit in the $\C_0(\Xi)$-strict topology of $\B^0(\Xi)$, defined by the semi-norms
\begin{equation}\label{endless}
\left\{\p f\p_{\B^0(\Xi)}^h:=\,\p hf\p_{\B^0(\Xi)}\,\mid h\in\C_0(\Xi)\right\}.
\end{equation}
In (\ref{endless}) one could use only smooth and compactly supported elements $h\in C^\infty_{{\rm c}}(\Xi)$
and one gets actually convergence which is uniform on compact subsets of $\Xi$.
Taking also Lemma \ref{cannaa} into account, it is enough to show that $\ker[\mathfrak T_\mathcal X]$
is the Rieffel deformation of $\ker[\mathcal T_\mathcal X]$, which would follow from
$\ker[\mathfrak T_\mathcal X]^\infty=\ker[\mathcal T_\mathcal X]^\infty$
(and actually this later equality would be enough to finish the proof).

Let us fix $f\in\ker[\mathcal T_\mathcal X]^\infty$, which membership is equivalent to
$\,\partial^\gamma f\in\ker[\mathcal T_\mathcal X]$ for all $\gamma\in\N^{2n}$. This means
$$
\lim_\mathcal X\p h\,\mathcal T_X(\partial^\gamma f)\p_{\mathcal B^0(\Xi)}\,=\,
\lim_{\mathcal X}\p \mathcal T_{-X}(h)\,\partial^\gamma f\p_{\mathcal B^0(\Xi)}=0
$$
for all $\gamma\in\N^{2n}$ and $h\in\C_{{\rm c}}^\infty(\Xi)$. Now consider $\alpha,\gamma\in\N^{2n}$
and $h\in\C_{{\rm c}}^\infty(\Xi)$ fixed; one has
$$
\p \partial^\alpha\left[h\,\mathcal T_X(\partial^\gamma f)\right]\p_{\mathcal B^0(\Xi)}\,\le
\sum_{\beta\le\alpha}C_{\alpha,\beta}\p \,\mathcal T_{-X}\(\partial^{\alpha-\beta}h\)\partial^{\beta+\gamma}
f\p_{\mathcal B^0(\Xi)}\,\underset{X\rightarrow\mathcal X}{\longrightarrow}0\,.
$$
This means that $\mathcal T_{-X}(h)\,\partial^\gamma f$ converges to $0$ in the Fr\'echet topology of $\B^\infty(\Xi)$.
From \cite[Prop. 4.13]{Rie1}
it will follow that $\p\mathcal T_{-X}(h)\,\sharp\,\partial^\gamma f\p_{\mathfrak B^0(\Xi)}\,=
\,\p h\,\sharp\,\mathfrak T_{X}(\partial^\gamma f)\p_{\mathfrak B^0(\Xi)}$
converges to zero when $X\rightarrow\mathcal X$. We get $\partial^\gamma f\in\ker[\mathfrak T_\mathcal X]$ for every
$\gamma\in\mathbb N^{2n}$, meaning that $f\in\ker[\mathfrak T_\mathcal X]^\infty$.

For the opposite inclusion $\ker[\mathfrak T_\mathcal X]^\infty\subset\ker[\mathcal T_\mathcal X]^\infty$
one uses Remark \ref{simica}.
\end{proof}

\begin{rem}\label{discrepanta}
Actually \cite[Prop. 4.13]{Rie1} refers to nets. One can rephrase it for filters, by suitable modifications.
On the other hand, there is a simple way to pass from filters to nets and conversely, preserving convergence. In fact this
is also a useful device if one wants to rewrite Theorem \ref{main} in terms of diverging nets on $\Xi$.
\end{rem}

\begin{rem}\label{repanta}
It is useful and interesting to record the present form of the proof of Proposition \ref{simon}, due to Vladimir Georgescu,
which does not depend on the pseudodifferential calculus. But with some more work, one could show that
$\mathfrak T_\mathcal X$ is the Rieffel deformation of the morphism $\mathcal T_\mathcal X$ for any ultrafilter $\mathcal X$.
Then one could just push the morphisms $\mathcal T_\mathcal X$
(known to exist and useful anyhow to characterize the ideal $\C_0(\Xi)\subset\B^0(\Xi)$) through the
formalism, getting successively $\mathfrak T_\mathcal X$ and $\mathbf T_\mathcal X$. A better option would be to
preserve Proposition \ref{simon} as it is and to obtain Proposition \ref{dator} in some direct way.
\end{rem}

\begin{cor}\label{mortar}
The quotient $\mathbf B^0(\H)/\mathbf C_0(\H)$ embeds canonically  as a $C^*$-subalgebra of
$\,\prod_{\mathcal X\in\delta(\Xi)}\mathbf B^0(\H)$,
where the sign $\prod$ denotes a restricted product: its elements are families with a uniform bound on the norms.
\end{cor}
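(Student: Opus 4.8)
The plan is to build the embedding directly out of the morphisms provided by Proposition \ref{simon} and to read off its kernel from Proposition \ref{dator}. Concretely, I would introduce
\begin{equation*}
\Phi:\mathbf B^0(\H)\longrightarrow\prod_{\mathcal X\in\delta(\Xi)}\mathbf B^0(\H),\qquad
\Phi(S):=\big(\mathbf T_\mathcal X(S)\big)_{\mathcal X\in\delta(\Xi)}.
\end{equation*}
By Proposition \ref{simon}, for each $\mathcal X\in\delta(\Xi)$ the map $\mathbf T_\mathcal X$ is a $^*$-morphism of $\mathbf B^0(\H)$ into itself; taken componentwise, $\Phi$ is therefore a $^*$-morphism. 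Since every $^*$-morphism of $C^*$-algebras is contractive, one has $\p\mathbf T_\mathcal X(S)\p\le\p S\p$ for each $\mathcal X$, so that $\sup_{\mathcal X\in\delta(\Xi)}\p\mathbf T_\mathcal X(S)\p\le\p S\p<\infty$. Thus $\Phi(S)$ is a family with a uniform bound on the norms; i.e. $\Phi$ genuinely takes values in the restricted product, which is itself a $C^*$-algebra under pointwise operations and the supremum norm.

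Next I would identify the kernel. An element $S$ lies in $\ker\Phi$ precisely when $\mathbf T_\mathcal X(S)=0$ for every $\mathcal X\in\delta(\Xi)$, and by Proposition \ref{dator} this holds if and only if $S\in\mathbf C_0(\H)$. Hence $\ker\Phi=\mathbf C_0(\H)$, and $\Phi$ factors through an injective $^*$-morphism
\begin{equation*}
\overline\Phi:\mathbf B^0(\H)/\mathbf C_0(\H)\longrightarrow\prod_{\mathcal X\in\delta(\Xi)}\mathbf B^0(\H).
\end{equation*}
Because an injective morphism between $C^*$-algebras is automatically isometric, $\overline\Phi$ is an isometric $^*$-monomorphism; its range is consequently a closed $^*$-subalgebra, that is, a $C^*$-subalgebra of the restricted product, which is exactly the asserted embedding.

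The point is that essentially all the work has already been absorbed into the two preceding propositions: Proposition \ref{simon} supplies the component morphisms together with their contractivity, and Proposition \ref{dator} pins down the kernel as the ideal of compacts. The only items needing (routine) verification are that $\Phi$ really lands in the restricted product — guaranteed by the uniform bound $\p\mathbf T_\mathcal X(S)\p\le\p S\p$ — and the standard fact that an injective $C^*$-morphism is isometric, which removes any concern about the quotient norm. I therefore expect no genuine analytic obstacle here; the whole difficulty of the argument was concentrated in the compactness criterion of Proposition \ref{dator}.
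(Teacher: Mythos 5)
Your proposal is correct and follows essentially the same route as the paper: form the product morphism $\big(\mathbf T_\mathcal X\big)_{\mathcal X\in\delta(\Xi)}$, identify its kernel as $\mathbf C_0(\H)$ via Proposition \ref{dator}, and pass to the quotient. You merely spell out the ``simple abstract argument'' (contractivity of morphisms giving the uniform norm bound, and injectivity of a $C^*$-morphism implying isometry) that the paper leaves implicit.
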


\begin{proof}
The kernel of the product morphism
$
\(\mathbf T_\mathcal X\)_{\mathcal X\in\delta(\Xi)}:\mathbf B^0(\H)\rightarrow\prod_{\mathcal X\in\delta(\Xi)}\mathbf B^0(\H)
$
coincides with $\bigcap_{\mathcal X\in\delta(\Xi)}\ker[\mathbf T_\mathcal X]$, which equals $\mathbf C_0(\Xi)$
by Proposition \ref{dator}. Then from a simple abstract argument it follows that
$\mathbf B^0(\H)/\mathbf C_0(\H)\hookrightarrow\prod_{\mathcal X\in\delta(\Xi)}\mathbf B^0(\H)\,$.
\end{proof}

Now Theorem \ref{main} follows easily. The essential spectrum of $H$ coincides with the spectrum of its image
(expressed at the level of resolvents) in the quotient $\mathbf B^0(\H)/\mathbf C_0(\H)$. This one can be computed in the
product $\,\prod_{\mathcal X\in\delta(\Xi)}\mathbf B^0(\H)$, so it is the closed union of spectra of all the components.
Some of the self-adjoint operators $H_\mathcal X$ might not be densely defined.

\section{Some comments and extensions}\label{joioasa}

\begin{rem}\label{unica}
There is a certain redundancy in (\ref{spess}). Two ultrafilters $\mathcal X$ and $\mathcal X'$ would give the same operator
$H_\mathcal X=H_{\mathcal X'}$ if they have the same envelope. The envelope $\mathcal X^\circ$ of $\mathcal X$ is
the filter generated by sets $A+V$ where $A\in\mathcal X$ and $V$ is a neighborhood of $0\in\Xi$.
This is explained in \cite[2.6]{GI3} in a different but connected setting.
\end{rem}

\begin{rem}\label{zerica}
One can use (\ref{spess}) to study the essential spectrum of self-adjoint operators affiliated to unital
$C^*$-subalgebras $\mathbf A$ of $\mathbf B^0(\H)$ which are invariant under the automorphisms $\mathbf T_X$,
by the same techniques as in sections 2.5 and 5.3 from \cite{GI3}; see also \cite{Ma}. Such algebras would induce a
rougher equivalence relation on the set $\delta(\Xi)$ then the one hinted in
Remark \ref{unica}. More precise information about the limits $H_{\mathcal X}$ would also be available.
So one could adapt to phase space
concrete types of anisotropy as those investigated in configuration space in references as \cite{Da,GI1,GI3,LS,Ma1}.
\end{rem}

\begin{rem}\label{doica}
The most efficient $C^*$-algebras considered until now in connection with the investigation of the essential spectrum
of anisotropic operators on $\mathbb R^n$ are $C^*$-subalgebras of
\begin{equation}\label{smaller}
\mathbf E(\H):=\left\{S\in\mathbf B^0(\H)\mid\ \parallel W(x,0)S^{(*)}-S^{(*)}\parallel_{\mathbf B(\H)}
\underset{x\rightarrow 0}{\longrightarrow}0\right\}.
\end{equation}
(The notation means that the condition is fulfilled both for $S$ and $S^*$.)
It is clear that $\mathbf E(\H)$ is an ideal in $\mathbf B^0(\H)$. It is known \cite{GI1,GI3} that
$\mathfrak E(\Xi):=\Op^{-1}[\mathbf E(\H)]$ coincides with the crossed product $\B^0(\X)\rtimes\X$ and it is also easy
to see that it is the Rieffel deformation of $\B^0(\X)\otimes \C_0(\X^*)$.
They played a privileged role in \cite{GI1,GI3,Ma1} (even for Abelian locally compact groups $\X$) in the study of
the essential spectrum of $\X$-anisotropic operators in $\H=L^2(\X)$, but they are not enough to cover phase-space anisotropy.
\end{rem}

\begin{rem}\label{treica}
Another natural ideal of $\mathbf B^0(\H)$ is
$$
\mathbf F(\H):=\{S\in\mathbf B^0(\H)\mid\ \parallel W(0,\xi)S^{(*)}-S^{(*)}\parallel_{\mathbf B(\H)}
\underset{\xi\rightarrow 0}{\longrightarrow}0\},
$$
for which obvious assertions can be made by analogy with $\mathbf E(\H)$, both concerning the structure and the usefulness.
The essential spectrum of self-adjoint operators $H$ affiliated to $\mathbf F(\H)$ would involve strong resolvent limits of
$\mathbf T_{(0,\xi)}(H)$ along ultrafilters finer than the Fr\'echet filter in the momentum space $\X^*$.

As a consequence of the Riesz-Kolmogorov criterion, one has $\mathbf E(\H)\cap\mathbf F(\H)=\mathbf C_0(\H)$.
\end{rem}

\section{Affiliation}\label{joasa}

We give explicit affiliation criteria to the $C^*$-algebras $\mathfrak B^0(\Xi)$ and $\mathbf B^0(\H)$.
Some of them are (almost) obvious, others are rather simple adaptations of results from previous articles (mostly \cite{GI3}),
so we present them as a sequence of examples. It goes without saying that all the operators proven previously
(as in \cite[Sect.4]{GI3}) to be affiliated to $\mathbf E(\H)$ are also affiliated to $\mathbf B^0(\H)$.

{\bf A.} Clearly, every self-adjoint element of $\mathbf B^0(\H)$ is affiliated to $\mathbf B^0(\H)$. This includes,
for instance, operators of the form $\Op(f)$, with $f\in \B^\infty(\Xi)_\mathbb R$.
Other examples are $\varphi(Q)$ or $\psi(P)$ with $\varphi\in \B^0(\X)_\mathbb R$ and $\psi\in \B^0(\X^*)_\mathbb R$
or self-adjoint linear combinations of products of such operators.

{\bf B.} If $H_0$ is already shown to be affiliated, obviously $H=H_0+H_1$ will be affiliated too for any
$H_1\in\mathbf B^0(\H)$.
Assume for instance that $\Op(f_0)$ is affiliated to $\mathbf B^0(\H)$. The same will be true for $\Op(f_0+f_1)$ for any real
$f_1\in \B^\infty(\Xi)$. In particular this happens for $H_1=\lambda\in\mathbb R$, so the affiliation to
$\mathbf B^0(\H)$ of lower bounded operators $H$ can be reduced to the case $H\ge 1$.

{\bf C.} For a real function $a$ defined on $\X^*$, the convolution operator $a(P)$ is affiliated to $\mathbf B^0(\H)$
if and only if the function $(a+i)^{-1}$ is uniformly continuous, since
$\mathbf T_{(x,\xi)}\left[(a(P)+i)^{-1}\right]=(a(P+\xi)+i)^{-1}$. Thus one needs to check that
$$
\sup_{\eta\in\X^*}\frac{|a(\eta+\xi)-a(\eta)|}{(1+|a(\eta+\xi)|)(1+|a(\eta)|)}\underset{\xi\rightarrow 0}{\longrightarrow}0\,.
$$
This happens, of course, when $a\in \B^0(\X^*)$, or when $a$ is proper (diverges at infinity),
since in this second case $(a+i)^{-1}\in \mathbb C\otimes C_0(\X^*)$ and $a(P)$ will even be affiliated
to $\mathbf E(\H)$. There are, of course, many other opportunities for $(a+i)^{-1}$ to be uniformly continuous.
Assume for instance, as in \cite[4.2]{GI3}, that $a$ is $C^1$ and equivalent to a weight.
If one has $|a'|\le C(1+|a|)$ for some constant $C$, then
$(a+i)^{-1}$ is indeed uniformly continuous. For criteria involving higher order derivatives, see \cite[Ex.\,4.17]{GI3}.
Let us use a decomposition $\X^*=\X^*_1\times\dots\times\X^*_m$ and pick real numbers $s_1,\dots,s_m$. The function
$a(\xi):=\langle\xi_1\rangle^{s_1}\dots\langle\xi_m\rangle^{s_m}$ leads to an operator $a(P)$ affiliated to $\mathbf B^0(\H)$
independently of the signs of $s_1,\dots,s_m$. Another interesting example is
$a(\xi):=\exp(s_1\xi_1+\dots+s_n\xi_n)$ in $\X^*=\mathbb R^n$.
Many other very anisotropic combinations are possible, going far beyond ellipticity.

{\bf D.} Similar statements hold for the multiplication operator $b(Q)$. Of course this follows directly, since
$$\mathbf T_{(x,\xi)}\left[(b(Q)+i)^{-1}\right]=(b(Q+x)+i)^{-1},$$ but can also be deduced from a general symmetry
principle: Assume that $f$ is affiliated to $\mathfrak B^0(\Xi)$ and identify $\X^*$ with $\X$. Then the function
$f^\circ(x,\xi):=f(\xi,x)$ is also affiliated to $\mathfrak B^0(\Xi)$.

{\bf E.} Let $H$ be a self-adjoint operator in $\H$ with domain $\mathcal E$ endowed with the graph norm.
Denoting by $\mathcal E^*$ the (anti-)dual of $\mathcal E$, one gets canonical embeddings
$\mathcal E\hookrightarrow\H\hookrightarrow\mathcal E^*$.
Assume that $W(X)\mathcal E\subset\mathcal E,\ \forall X\in\Xi$. Then $H$ is affiliated to $\mathbf B^0(\H)$ if and only if
$\,\p [W(X),H]\p_{\mathbf B(\mathcal E,\mathcal E^*)}\underset{X\rightarrow 0}{\longrightarrow} 0$.

{\bf F.} If only the form domain $\mathcal G$ of the self-adjoint operator $H$ is invariant under $W$, then the relation
$$\,\p [W(X),H]\p_{\mathbf B(\mathcal G,\mathcal G^*)}\equiv\,
\p \mathbf T_X(H)-H\p_{\mathbf B(\mathcal G,\mathcal
G^*)}\underset{X\rightarrow 0}{\longrightarrow} 0$$ would imply
that $H$ is affiliated to the $C^*$-algebra $\mathbf B^0(\H)$.

See \cite[Def.\,4.7,\,\,Cor.\,4.8,\,\,Prop.\,4.9]{GI3} for the affiliation of abstract operators defined as
form-sums $H=H_0+H_1$.

\section{Second order differential operators}\label{seretide}

We are interested in partial differential operators in $\H=L^2(\mathbb R^n)$ which are
defined formally as $H_a:=\sum_{j,k=1}^n P_ja_{jk}(Q)P_k$.
Perturbations (especially by multiplication operators) can be added by the results reviewed in Section \ref{joasa}.
It will always be assumed that the matrix $(a_{jk}(x))$ is positive definite and given by $L^1_{{\rm loc}}$-functions.
Defining the quadratic form $q^{(0)}_a$ on $\C_c^\infty(\X)$ (the smooth compactly supported functions on $\X=\mathbb R^n$) by
\begin{equation*}\label{quadratic}
q^{(0)}_a(u):=\int_{\mathbb R^n}dx\sum_{j,k=1}^n a_{jk}(x)\overline{(\partial_j u)(x)}(\partial_k u)(x),
\end{equation*}
we are also going to suppose that this quadratic form is closable. Generous explicit conditions on $a$ insuring
this can be found in \cite{Dav,RW}.

We define a norm on $\C_c^\infty(\X)$ by
$\p u\p_a:=\left(q^{(0)}_a(u)+\p u\p^2\right)^{1/2}$ and denote by $\mathcal G_a$ the Hilbert space obtained by completing
$\C_c^\infty(\X)$ with respect to $\p \cdot\p_a$\,. One has canonically
$\mathcal G_a\hookrightarrow\H\hookrightarrow\mathcal G^*_a$
and $q_a^{(0)}$ extends to a closed form $q_a:\mathcal G_a\rightarrow[0,\infty)$. A unique self-adjoint
positive operator $H_a$ is assigned to $q_a$, with $D(H_a^{1/2})=\mathcal G_a$ and
$\p H_a^{1/2}u\p=q_a(u)^{1/2},\,\forall u\in\mathcal G_a$;
it extends to a symmetric element of $\mathbf B(\mathcal G_a;\mathcal G_a^*)$. Just under the conditions above we say that
$H_a$ is {\it weakly elliptic}. If it is {\it uniformly elliptic} (i.e. $0<c\,{\rm id}\le a(\cdot)\le c'\,{\rm id}<\infty$),
it is known \cite{DG,GI3}  to be affiliated to $\mathbf E(\H)\subset\mathbf B^0(\H)$.

\begin{prop}\label{domeniul}
Assume that $0<a(\cdot)\le c'\,{\rm id}<\infty$ and that there is a continuous function
$C:\X\rightarrow(0,\infty)$ satisfying $C(0)=1$ such that
\begin{equation}\label{adios}
a(z+x)\le C(x)a(z),\ \quad \forall\,x,z\in\X.
\end{equation}
Then $W(X)\mathcal G_a\subset\mathcal G_a$ for all $X\in\Xi$ and $H_a$ is affiliated to $\mathbf B^0(\H)$.
\end{prop}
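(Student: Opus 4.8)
The plan is to reduce everything to the form-domain affiliation criterion \textbf{F} of Section~\ref{joasa}: once the invariance $W(X)\mathcal G_a\subset\mathcal G_a$ is established, it suffices to show that $\p\mathbf T_X(H_a)-H_a\p_{\mathbf B(\mathcal G_a,\mathcal G_a^*)}\to 0$ as $X\to 0$. First I would record, following the sign conventions of Examples \textbf{C} and \textbf{D}, that conjugation by $W(x,\xi)$ acts on generators by $\mathbf T_{(x,\xi)}(Q_j)=Q_j+x_j$ and $\mathbf T_{(x,\xi)}(P_j)=P_j+\xi_j$, so that on the core $\C_c^\infty(\X)$
\begin{equation*}
\mathbf T_{(x,\xi)}(H_a)=\sum_{j,k=1}^n(P_j+\xi_j)\,a_{jk}(Q+x)\,(P_k+\xi_k).
\end{equation*}
In this way both the invariance of $\mathcal G_a$ and the commutator estimate become a comparison between $q_a$ and the quadratic form obtained by translating the coefficient matrix by $x$ and shifting the gradient by $\xi$.

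The heart of the proof, and the step I expect to be the main obstacle, is the translation of the coefficients; this is exactly where the weight hypothesis (\ref{adios}) must be turned into a symmetric, scale-free form comparison. Writing (\ref{adios}) both for the pair $(x,z)$ and for the pair $(-x,z+x)$ gives the two-sided matrix bound $C(-x)^{-1}a(z)\le a(z+x)\le C(x)\,a(z)$; since $C$ is continuous with $C(0)=1$, this produces $-\varepsilon(x)\,a(z)\le a(z+x)-a(z)\le\varepsilon(x)\,a(z)$ with $\varepsilon(x):=\max\{C(x)-1,\,1-C(-x)^{-1}\}\to 0$ as $x\to 0$. Pairing against $\overline{\partial_j u}\,\partial_k u$ and integrating, I would deduce the uniform estimate $|q_{a(\cdot+x)}(u)-q_a(u)|\le\varepsilon(x)\,q_a(u)\le\varepsilon(x)\,\p u\p_a^2$; after polarization this says precisely that the $\xi=0$ part of $\mathbf T_X(H_a)-H_a$ has $\mathbf B(\mathcal G_a,\mathcal G_a^*)$-norm controlled by $\varepsilon(x)$, hence vanishing as $x\to 0$. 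The same two-sided bound shows that translation is bounded on $\mathcal G_a$, and since it clearly preserves $\C_c^\infty(\X)$ it maps $\mathcal G_a$ into itself.

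It then remains to dispose of the terms carrying a factor of $\xi$, and here I would use only the upper bound $a(\cdot)\le c'\,\mathrm{id}$ to see that they are lower order. A typical cross term $\int a_{jk}(z+x)\,\overline{\partial_j v}\,\xi_k\,u$ is bounded by Cauchy--Schwarz by $q_{a(\cdot+x)}(v)^{1/2}\,(c'C(x))^{1/2}\,|\xi|\,\p u\p$, which for small $x$ is $\le K\,|\xi|\,\p v\p_a\,\p u\p_a$ because $q_{a(\cdot+x)}(v)\le C(x)q_a(v)$ and $a(\cdot+x)\le C(x)c'$; the purely quadratic contribution $\int\big(\xi\cdot a(z+x)\xi\big)|u|^2\le c'C(x)|\xi|^2\,\p u\p^2$ is handled likewise. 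All of these vanish as $\xi\to 0$ uniformly on the unit ball of $\mathcal G_a$, and the same estimates applied to $W(X)u$ give the modulation part of the form-domain invariance. Putting the two paragraphs together yields $\p\mathbf T_X(H_a)-H_a\p_{\mathbf B(\mathcal G_a,\mathcal G_a^*)}\to 0$ as $X\to 0$, so criterion \textbf{F} gives the affiliation of $H_a$ to $\mathbf B^0(\H)$. Only the passage from the one-sided bound (\ref{adios}) to the symmetric comparison requires care; the momentum shifts are then routine.
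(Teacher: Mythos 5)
Your proof is correct and follows essentially the same route as the paper: reduction to criterion \textbf{F}, the same expansion of $\mathbf T_{(x,\xi)}(H_a)$ on $\C_c^\infty(\X)$, control of the translated coefficients via (\ref{adios}) and of the $\xi$-terms via the bound $a\le c'\,\mathrm{id}$. You simply supply the details (the symmetrized two-sided comparison and the Cauchy--Schwarz estimates) that the paper compresses into ``it follows easily.''
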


\begin{proof}
The first assertion is very simple to check.

\noindent
Then notice that, computing on $\C_c^\infty(\X)$, one has the identity
$$
\begin{aligned}
\mathbf T_X(H_a)-H_a=&\sum_{j,k=1}^n P_j[a_{jk}(Q+x)-a_{jk}(Q)]P_k\\
+&\sum_{j,k=1}^n\left\{\xi_ja_{jk}(Q+x)P_k+P_ja_{jk}(Q+x)\xi_k+a_{jk}(Q+x)\xi_j\xi_k\right\}.
\end{aligned}
$$
Using (\ref{adios}) it follows easily that
$$
\langle u,[\mathbf T_X(H_a)-H_a]u\rangle\le D(X)\p u\p_{\mathcal G_a}^2,\ \quad\forall\,u\in \C_c^\infty(\X)
$$
with $D(X)\rightarrow 0$ when $X\rightarrow 0$, implying that
$\p \mathbf T_X(H_a)-H_a\p_{\mathbf B(\mathcal G_a;\mathcal G_a^*)}\rightarrow 0$ when $X\rightarrow 0$.
Thus $H_a$ is affiliated to $\mathbf B^0(\H)$, by the criterion {\bf F} of the preceding Section.
\end{proof}

\begin{rem}\label{ptimal}
{\rm This is far from optimal. If the coefficients $a(x)$ grow faster than $|x|^2$ at infinity,
then $H_a$ has a compact resolvent by \cite[Cor.\,1.6.7]{Dav}, so it is affiliated to
$\mathbf C_0(\H)\subset\mathbf E(\H)\subset\mathbf B^0(\H)$.}
\end{rem}

\begin{rem}\label{nogo}
{\rm By \cite[Th. 9]{DG}, if there is a diverging sequence of points $(x_m)_{m\in\mathbb N}$ in the configuration space $\X$
and a diverging sequence $(r_m)_{m\in\mathbb N}$ of positive numbers such that
$$
\lim_{m\rightarrow\infty}\left\{\sup_{|x-x_m|\le r_m}\p a(x)\p\right\}=0,
$$
then the operator $H_a$ is not affiliated to the crossed product $C^*$-algebra $\mathbf E(\H)$.
This happens for instance if $\p a(x)\p\rightarrow 0$ when $x\rightarrow\infty$.
In a huge number of such situations (\ref{adios}) is fulfilled and one really needs ultrafilters
in phase space to describe the essential spectrum.
}
\end{rem}



\begin{thebibliography}{1}

\bibitem{ABG}{\it W. O. Amrein, A. Boutet de Monvel, V. Georgescu}.
$C_0$-Groups, Commutator Methods and Spectral Theory of N-Body Hamiltonians.\emph{ Birkh\"auser, Basel}, 1996.

\bibitem{BM} {\it I. Belti\c t\u, M. M\u antoiu}. Rieffel quantization and twisted crossed products. submitted.

\bibitem{Bo} {\it N. Bourbaki}. El\'ements de math\'ematique: topologie g\'en\'erale, Ch. 1–4, \emph{Hermann, Paris,} 1971.

\bibitem{Dav} {\it E. B. Davies}. Heat Kernels and Spectral Theory. \emph{Cambridge Univ. Press}, 1989.

\bibitem{Da} {\it E. B. Davies}. Decomposing the essential spectrum.  \emph{J. Funct. Anal.} \textbf{257} no.2,
(2009), 506--536.

\bibitem{DG} {\it E. B. Davies, V. Georgescu}. $C^*$-algebras associated with some second order differential operators.
Preprint ArXiV and to appear in J. Operator Theory.

\bibitem{Fol} {\it G. B. Folland}. Harmonic Analysis in Phase Space.
Annals of Mathematics Studies, {\textbf 122} \emph{ Princeton
University Press, Princeton, NJ}, 1989.

\bibitem{Ge} {\it V. Georgescu}. On the structure of the essential spectrum of elliptic operators in metric spaces.
\emph{J. Funct. Anal.} {\textbf 260} (2011), 1734--1765.

\bibitem{GI1} {\it V. Georgescu, A. Iftimovici}. Crossed
Products of $C^*$-algebras and spectral analysis of quantum
Hamiltonians. \emph{Commun. Math. Phys.} {\textbf 228} (2002),
519--560.

\bibitem{GI2} {\it V. Georgescu, A. Iftimovici}. Riesz Kolmogorov compactness criterion,
Lorentz convergence and Ruelle theorem on locally compact Abelian
groups. \emph{Potential Analysis} {\textbf 20}, (2004) 265–-284.

\bibitem{GI3} {\it V. Georgescu, A. Iftimovici}. Localizations at infinity and essential spectrum of quantum
Hamiltonians. I. General Theory. \emph{Rev. Math. Phys.} {\textbf
18} (4), (2006),  417--483.

\bibitem{HM} {\it B. Helffer, A. Mohamed}. Caract\'erisation du spectre essentiel de l'op\'erateur de Schr\"odinger
avec un champ magn\'etique. \emph{Ann. Inst. Fourier} {\textbf 38}
no.2 (1988), 95--112.

\bibitem{Ka} {\it P. Kasprzak}. Rieffel deformation via crossed products. \emph{J. Funct. Anal.} \textbf{257} no.5
(2009), 1228--1332.

\bibitem{LS} {\it Y. Last, B. Simon}. The essential spectrum of Schr\"{o}dinger, Jacobi and CMV operators.
\emph{J. d'Analyse Math.} {\textbf 98}  (2006), 183--220.

\bibitem{LN} {\it R. Lauter, V. Nistor}. Analysis of geometric operators on open manifolds: a Groupoid
Approach. in \emph{Quantization of Singular Symplectic Quotients},
Progr. Math., {\textbf 198} Birkh\"auser, Basel (2001), 181-229.

\bibitem{Ma1}{\it M. M\u antoiu}. Compactifications, dynamical systems at infinity and the essential spectrum of
generalized Sch\"odinger operators. \emph{J. reine angew. Math.}
{\textbf 550} (2002), 211--229.

\bibitem{Ma} {\it M. M\u antoiu}. Rieffel's pseudodifferential calculus and spectral analysis for quantum Hamiltonians.
Preprint ArXiV and to appear in \emph{Ann. Inst. Fourier}.

\bibitem{MPR} {\it M. M\u antoiu, R. Purice, S. Richard}. Spectral and propagation results
for magnetic Schr\"odinger operators; a $C^*$-algebraic framework.
\emph{J. Funct. Anal.} {\textbf 250}  (2007), 42--67.

\bibitem{Rie1} {\it M. A. Rieffel}. Deformation quantization for actions of $\,\mathbb R^d$. \emph{Memoirs AMS.}
{\textbf 506}  (1993).

\bibitem{RRR} {\it V. S. Rabinovich, S. Roch, J.Roe}. Fredholm indices of band-dominated operators.
\emph{Int. Eq. Op. Th.} {\textbf 49}  (2004), 221--238.

\bibitem{RRS}{\it V. S. Rabinovich, S. Roch, B. Silbermann}. Limit Operators and their Applications in Operator Theory.
\emph{Operator Theory: Adv. and Applic}. {\textbf 150}
Birkh\"auser, Basel, 2004.

\bibitem{RaW} {\it I. Raeburn, D. Williams}.  Morita Equivalence and Continuous-Trace $C^*$-Algebras.
\emph{Mathematical Surveys and Monographs}. {\textbf 60} American
Mathematical Society (1998).

\bibitem{RW} {\it M. R\"ockner, N. Wielens}. Dirichlet forms - closability and change of speed measure, in
\emph{Infinite Dimensional Analysis and Stochastic Processes},
Research Notes in Mathematics {\textbf 124} Pitman (1985)
119--144.

\bibitem{Wo} {\it S. L. Woronowicz}. Unbounded elements affiliated with $C^*$-algebras and non-compact quantum groups.
\emph{Commun. Math. Phys.} {\textbf 136} (1991), 399–-432.

\end{thebibliography}
\end{document}